\newcommand{\Z}{\mathbb{Z}}
\newcommand{\ca}{\mathcal A}
\newcommand{\twosum}[2]{\sum_{\substack{#1\\#2}}}
\newtheorem{thm}{Theorem}[section]
\newtheorem{lem}[thm]{Lemma}
\begin{document}
\title{Almost-prime values of polynomials at prime arguments}
\author{A.J. Irving\\
Centre de recherches math\'ematiques, Universit\'e de Montr\'eal}
\date{}

\maketitle

\begin{abstract}
We consider almost-primes of the form $f(p)$ where $f$ is an irreducible polynomial over $\mathbb Z$ and $p$ runs over primes.  We improve a result of Richert for polynomials of degree at least $3$.  In particular we show that, when the degree is large, there are infinitely many primes $p$ for which $f(p)$ has at most $\deg f+O(\log\deg f)$ prime factors. 
\end{abstract}

\section{Introduction}

A well known problem in number theory is to show that if the values $f(n)$ taken by an irreducible polynomial $f\in\Z[x]$ have no fixed prime divisor then there are infinitely many $n$ for which $f(n)$ is prime.  This is known for polynomials of degree $1$, in which case it is Dirichlet's theorem on primes in arithmetic progressions, but it is an open problem for $f$ of higher degree.  By using sieve methods one can prove a weaker statement, namely that there are infinitely many $n$ for which the values $f(n)$ have a bounded  number of prime factors.  If we let $P_r$ denote numbers with at most $r$ prime factors, counted with multiplicities, then Richert \cite[Theorem 6]{richert} showed that $f(n)$ is infinitely often a $P_{\deg f+1}$.  This was improved for a quadratic polynomial by Iwaniec \cite{iwaniecn21}, who showed that $n^2+1$ is a $P_2$ for infinitely many $n$.  

In this paper we will consider  $f(p)$ as $p$ runs over primes.  Assuming they have no fixed prime divisor it is conjectured that these values contain infinitely many primes.  This is not even known for linear polynomials, a special case of which would be the famous twin prime conjecture.  Concerning almost-primes, it was shown by Richert \cite[Theorem 7]{richert} that $f(p)$ is infinitely often a $P_{2\deg f+1}$.  The only known improvement to this result is due to Chen \cite{chen} who showed that we can find infinitely many $P_2$ when $\deg f=1$.
  The aim of this work is to improve Richert's result on the values $f(p)$ for all $f$ with $\deg f\geq 3$.

\begin{thm}\label{mainthm}
Suppose $f\in\Z[x]$ is an irreducible polynomial with a positive leading coefficient and that for all primes $p$ we have 
\begin{equation}\label{localcond}
\#\{a\pmod p:(a,p)=1\text{ and }f(a)\equiv 0\pmod p\}<p-1.
\end{equation}
Then, for all sufficiently large $x$, we have 
$$\#\{p\leq x:f(p)\in P_r\}\gg_{f,r} \frac{x}{(\log x)^2},$$
provided that $r\geq r(\deg f)$.  The values of $r(\deg f)$ for small degree are given by Table \ref{smalldeg}.  When $\deg f$ is sufficiently large they satisfy 
$$r(\deg f)=\deg f+c\log\deg f+O(1),$$
with $c=3.120\ldots.$
\end{thm}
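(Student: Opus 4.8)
The plan is to treat this as a weighted lower-bound sieve for the sequence $\ca=(f(p))_{p\le x}$, designed to detect those $p$ for which $\Omega(f(p))\le r$ (that is, $f(p)\in P_r$), where $\Omega$ counts prime factors with multiplicity. The starting observation is that $q\mid f(p)$ forces $p$ into one of $\rho^*(q):=\#\{a\bmod q:(a,q)=1,\ f(a)\equiv0\}$ reduced classes modulo $q$, so that
\[
\#\{p\le x:q\mid f(p)\}=\frac{\rho^*(q)}{\phi(q)}\,\pi(x)+\text{error}.
\]
Since an irreducible $f$ has on average exactly one root modulo $q$ (the average of $\rho^*(q)$ being governed by the splitting of $q$ in $\Q[x]/(f)$), the density $\rho^*(q)/\phi(q)$ makes $\ca$ a sieve problem of dimension $1$, i.e.\ the linear sieve. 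Hypothesis \eqref{localcond} is precisely what guarantees $\rho^*(q)<q-1=\phi(q)$ for every $q$, so that the singular product $\prod_q(1-\rho^*(q)/\phi(q))$ is strictly positive and $\ca$ has no fixed prime divisor; without it the count is forced to vanish. The target order $x/(\log x)^2$ should then drop out, one logarithm from $\pi(x)\sim x/\log x$ and a second from the one-dimensional sieve factor $\prod_{q<z}(1-\rho^*(q)/\phi(q))\asymp(\log x)^{-1}$.

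Next I would establish a level of distribution for $\ca$, namely a bound
\[
\sum_{d\le D}\Bigl|\#\{p\le x:d\mid f(p)\}-\frac{\rho^*(d)}{\phi(d)}\pi(x)\Bigr|\ll_A\frac{\pi(x)}{(\log x)^A}
\]
with $D$ as large as possible. As $d\mid f(p)$ splits into $\rho^*(d)$ progressions modulo $d$, this is exactly a Bombieri--Vinogradov statement, and the classical theorem supplies only $D=x^{1/2-\varepsilon}$. Fed into Richert's weighted sieve this reproduces the exponent $2\deg f$: a value $f(p)\le x^{\deg f}$ whose prime factors all exceed the lower-bound sieving limit $x^{1/4}$ can have up to $4\deg f$ of them, and the weights recover only a factor of two. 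To reach $\deg f+O(\log\deg f)$ I would push the effective level towards $D=x^{1-\varepsilon}$ by two devices: first, the Rosser--Iwaniec weights of the linear sieve are well-factorable, so one may invoke the Bombieri--Friedlander--Iwaniec equidistribution estimates for primes to well-factorable moduli; and second, decisively, a Chen-type switching argument that for the dangerous factorisations transfers the role of prime variable onto a product of medium factors, effectively doubling the accessible level. It is this effective doubling, already responsible for Chen's $P_2$ in the case $\deg f=1$, that is needed to turn the exponent $2\deg f$ into $\deg f$.

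With the level in hand I would run the weighted linear sieve of Diamond--Halberstam--Richert (logarithmic weights sharpened by Buchstab iteration). Writing the relative level as $N^{1/s}$ with $N=x^{\deg f}$, so that $s\approx\deg f$ once the level reaches $x^{1-\varepsilon}$, this sieve yields a positive lower bound for $\#\{p\le x:\Omega(f(p))\le r\}$ exactly when $r$ exceeds a threshold of the shape $s+c\log s+O(1)$; specialising $s\approx\deg f$ gives $r(\deg f)=\deg f+c\log\deg f+O(1)$. The constant $c=3.120\ldots$ is combinatorial rather than arithmetic: it falls out of optimising the cut-off parameters and the number of Buchstab decompositions against the classical linear-sieve functions $F$ and $f$ as $s\to\infty$, and the small-degree entries come from the same optimisation performed numerically at each fixed degree.

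The main obstacle, I expect, is the level of distribution. Bombieri--Vinogradov alone only recovers Richert, and forcing the effective level up to $x^{1-\varepsilon}$ requires keeping the sieve weights well-factorable throughout and, above all, controlling the switching argument uniformly in the degree, where the number of distinct dangerous factorisation patterns grows. Threading this enlarged level through the multi-parameter weighted-sieve optimisation---while keeping every remainder summable and the main term of the correct size $x/(\log x)^2$---is the delicate core on which both the constant $c$ and the precise degree-dependence of $r$ depend.
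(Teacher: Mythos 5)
There is a genuine gap here, and it sits exactly at the point you yourself flag as the ``delicate core.'' You correctly identify that Bombieri--Vinogradov only reproduces Richert's $2\deg f+1$ and that the whole game is to raise the effective level of distribution, but the two devices you propose do not deliver it. The Bombieri--Friedlander--Iwaniec well-factorable estimates require a \emph{fixed} residue class $a\pmod q$ with $(a,q)=1$, uniformly over the moduli; here the condition $d\mid f(p)$ places $p$ in the $\nu_1(d)$ residue classes given by the roots of $f$ modulo $d$, and these classes vary with $d$, so those theorems do not apply --- this is a standard and serious obstruction, not a technicality. As for the Chen-type switching, no such argument is known for $f(p)$ with $\deg f\geq 2$: Chen's switching exploits the specific bilinear structure of $p+2$, and you give no construction that would generalise it, let alone uniformly in the degree. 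The paper's actual mechanism is entirely different: for the weighted-sieve terms $S(\ca_p,z)$ with $p$ in the upper range $[N^\delta,N^\beta)$, one \emph{abandons the primality of the argument} and instead applies a $2$-dimensional upper bound sieve to the set $\{nf(n):n\in(x,2x],\ p\mid f(n)\}$ with density $\nu_2(d)/d$ (Lemma \ref{lem2dim}). The remainder term for that sieve problem is elementary, $O_\epsilon(d^\epsilon)$ per squarefree modulus, so the level may be taken as $N^{\theta_2}$ for any $\theta_2<1/k$, i.e.\ essentially $x$ itself; no equidistribution theorem for primes beyond Bombieri--Vinogradov is needed anywhere.

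This also explains why your account of the constant $c$ cannot be right. If one really had a $1$-dimensional (linear) weighted sieve at level $x^{1-\varepsilon}$, Richert's machinery would give $r=\deg f+1$ with no logarithmic term --- the paper notes precisely this as the consequence of Elliott--Halberstam. The $c\log\deg f$ term is the price of trading dimension for level: the $2$-dimensional upper bound function satisfies $F_2(s)\sim A_2s^{-2}$ as $s\to0$, which the logarithmic weights $w_p$ do not decay fast enough to absorb, and the resulting integral $\int_{\delta_0}^{\beta_0}(\frac1s-\frac1{\beta_0})\frac{ds}{(1-s)^2}$ contributes $\log k+O(1)$. Concretely $c=e^{-2\gamma}A_2/(4\log 3)$ with $A_2=43.496\ldots$ the $2$-dimensional beta-sieve constant; it is not an artefact of optimising Buchstab iterations in the linear sieve. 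So the shape of your final answer is borrowed from the theorem statement rather than derived from your argument, and the argument as proposed would either stall at Richert's exponent or, if the level could somehow be pushed to $x^{1-\varepsilon}$ for a $1$-dimensional sieve, would prove something stronger than the theorem --- a sign that the route is not the one that actually closes.
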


\begin{table}
\centering
\label{smalldeg}
  \begin{tabular}{r|rrrrrrrr}
    $\deg f$ & 3 & 4 & 5 & 6 & 7 & 8 & 9 & 10\\
\hline
$r$&  6 & 8 & 10 & 11 & 12 & 14 & 15 & 16\\ 
  \end{tabular}
\caption{Values of $r(\deg f)$ for small degrees}
\end{table}

The most significant feature of Theorem \ref{mainthm} is that the factor $2$ in Richert's result has been removed, at the cost of an additional lower-order term.  In particular, for polynomials of large degree, we have come within $O(\log\deg f)$ of getting the same result for $f(p)$ as Richert's for $f(n)$.  The results for small degree are less striking, and we cannot improve on $r=5$ for $\deg f=2$.

Richert's result, $r=2\deg f+1$, 
makes crucial use of the Bombieri-Vinogradov theorem. He is therefore limited to sieving out primes up to at most $\sqrt{x}$ and thus he cannot achieve $r<2\deg f$.  Under the assumption of the Elliott-Halberstam conjecture \cite{elliott}, which gives a best possible level of distribution for the primes in arithmetic progressions, Richert's argument could be modified to yield $r=\deg f+1$.  In this work we use the same weighted sieve as Richert.  However, we will show in Lemma \ref{lem2dim} that some terms in the resulting sum may be estimated more efficiently using a $2$-dimensional sieve, rather than with a $1$-dimensional sieve and the Bombieri-Vinogradov theorem.  The result is that we may sieve by primes which are almost as large as $x$.

\subsection*{Acknowledgements}

This work was completed whilst I was a CRM-ISM postdoctoral fellow at the Universit\'e de Montr\'eal.  I am grateful to Andrew Granville and Roger Heath-Brown for some useful suggestions.

\section{Notation and Preliminaries}

Throughout this work $p$ will always denote a prime variable.  For a large $x$ we will apply a sieve to the set 
$$\ca=\{f(p):p\in (x,2x]\}.$$
All uses of the symbols $o$ and $\sim$ will be as $x\rightarrow\infty$ and any inequality using $\gg$ will be assumed only to hold for all sufficiently large $x$.  All our implied constants may depend on the polynomial $f$.  To prove Theorem \ref{mainthm} it is sufficient to show that 
$$\#(\ca\cap P_r)\gg \frac{x}{(\log x)^2}.$$
We will assume that $f$ has a nonzero constant term, since the only irreducible $f$ for which this does not hold are $f(x)=ax$ and the result is trivial in that case.

For convenience we will write $k=\deg f$.  We let 
$N=\max\ca$
and note that this satisfies 
$$N\sim f_kx^k,$$
where $f_k$ is the leading coefficient of $f$ which we are assuming is positive.  We write 
$$X=\#\ca=\pi(2x)-\pi(x),$$
where $\pi(x)$ is the usual prime counting function.
  It follows by the Prime Number Theorem that 
$$X\sim \frac{x}{\log x}.$$

We will use the arithmetic functions 
$$\nu_1(d)=\#\{a\pmod d:(a,d)=1\text{ and }f(a)\equiv 0\pmod d\}$$
and 
$$\nu_2(d)=\#\{a\pmod d:af(a)\equiv 0\pmod d\}.$$
By the Chinese Remainder Theorem both $\nu_1$ and $\nu_2$ are multiplicative.  We note that if $p\nmid f(0)$ then 
$$\nu_1(p)=\#\{a\pmod p:f(a)\equiv 0\pmod p\}.$$
 In addition, since $\nu_1(p)$ never counts $0\pmod p$, we see that for all $p$ we have 
\begin{equation}\label{nu1nu2}
\nu_2(p)=\nu_1(p)+1.
\end{equation}
The next lemma gives asymptotic formulae for the various sums and products of $\nu_1$ and $\nu_2$ which we will need.

\begin{lem}\label{sumprods}
For $x\geq 2$ we have 
\begin{equation}\label{nu1sum}
\sum_{p\leq x}\frac{\nu_1(p)\log p}{p-1}=\log x+O(1),
\end{equation}
\begin{equation}\label{nu2sum}
\sum_{p\leq x}\frac{\nu_2(p)\log p}{p}=2\log x+O(1),
\end{equation}
\begin{equation}\label{nu1prod}
\prod_{p\leq x}\left(1-\frac{\nu_1(p)}{p-1}\right)\sim \frac{c_f}{\log x}
\end{equation}
and 
\begin{equation}\label{nu2prod}
\prod_{p\leq x}\left(1-\frac{\nu_2(p)}{p}\right)\sim \frac{e^{-\gamma}c_f}{(\log x)^2},
\end{equation}
where $c_f$ denotes a constant which is positive assuming (\ref{localcond}).
\end{lem}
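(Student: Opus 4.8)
The plan is to reduce all four statements to a single analytic input: the estimate
$$\sum_{p\leq x}\frac{\nu_1(p)\log p}{p}=\log x+O(1),$$
which I regard as the arithmetic heart of the lemma. Since $f$ is irreducible, writing $K=\Q(\alpha)$ for a root $\alpha$ of $f$, the quantity $\nu_1(p)$ (which for $p\nmid f(0)$ is just the number of roots of $f$ modulo $p$) agrees, for all but the finitely many ramified primes and primes dividing the index $[\mathcal O_K:\Z[\alpha]]$, with the number of degree-one prime ideals of $K$ above $p$. The Prime Ideal Theorem for $K$ (equivalently, Landau's theorem, or the Frobenius density theorem together with the fact that the average number of fixed points of the Galois group acting on the roots is one) then yields $\sum_{p\leq x}\nu_1(p)\log p\sim x$, and partial summation gives the displayed estimate. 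The finitely many exceptional primes contribute only $O(1)$ and may be discarded throughout.

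Granting this, (\ref{nu1sum}) follows at once: replacing $p$ by $p-1$ in the denominator costs $\nu_1(p)\log p\cdot(\tfrac{1}{p-1}-\tfrac1p)=O(\log p/p^2)$ per prime, and $\sum_p \log p/p^2$ converges, so $\sum_{p\le x}\nu_1(p)\log p/(p-1)=\log x+O(1)$. For (\ref{nu2sum}) I use (\ref{nu1nu2}), which gives $\nu_2(p)=\nu_1(p)+1$, so that
$$\sum_{p\leq x}\frac{\nu_2(p)\log p}{p}=\sum_{p\leq x}\frac{\nu_1(p)\log p}{p}+\sum_{p\leq x}\frac{\log p}{p}=\log x+\log x+O(1)=2\log x+O(1),$$
the second sum being Mertens' estimate.

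For the product (\ref{nu1prod}) I first pass from the log-weighted sum to the unweighted one by partial summation, obtaining $\sum_{p\le x}\nu_1(p)/(p-1)=\log\log x+C_f+o(1)$ for a constant $C_f$ (the convergence of the error integral $\int^\infty E(t)/(t(\log t)^2)\,dt$, with $E(t)=O(1)$ the error in the weighted sum, is what produces the constant). Taking logarithms of the product and expanding $\log(1-t)=-t+O(t^2)$, the quadratic tail $\sum_p\big(\log(1-\nu_1(p)/(p-1))+\nu_1(p)/(p-1)\big)$ converges absolutely since its terms are $O(\nu_1(p)^2/p^2)=O(1/p^2)$. Hence $\log\prod_{p\le x}(1-\nu_1(p)/(p-1))=-\log\log x+\log c_f+o(1)$, which is (\ref{nu1prod}) after exponentiating. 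Here hypothesis (\ref{localcond}), namely $\nu_1(p)<p-1$, is exactly what guarantees that every factor is strictly positive, so that no factor vanishes and the logarithms are defined, and thus that $c_f>0$; for large $p$ this is automatic since $\nu_1(p)\le k$.

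Finally, (\ref{nu2prod}) follows from (\ref{nu1prod}) with no further arithmetic. Using (\ref{nu1nu2}) one checks the factorisation
$$1-\frac{\nu_2(p)}{p}=\left(1-\frac1p\right)\left(1-\frac{\nu_1(p)}{p-1}\right),$$
so the product splits as $\prod_{p\le x}(1-1/p)\cdot\prod_{p\le x}(1-\nu_1(p)/(p-1))$. The first factor is $\sim e^{-\gamma}/\log x$ by Mertens' theorem and the second is $\sim c_f/\log x$ by (\ref{nu1prod}), giving $\sim e^{-\gamma}c_f/(\log x)^2$. The only genuine difficulty in the whole lemma is the input estimate $\sum_{p\le x}\nu_1(p)\log p\sim x$; once this is in hand everything reduces to partial summation, Mertens' theorems, and the elementary factorisation above.
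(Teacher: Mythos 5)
Your proposal is correct and follows essentially the same route as the paper: the same key input $\sum_{p\le x}\nu_1(p)\log p/p=\log x+O(1)$ from algebraic number theory, the same $O(\log p/p^2)$ comparison for (\ref{nu1sum}), the same use of (\ref{nu1nu2}) plus Mertens for (\ref{nu2sum}), and the same factorisation $1-\nu_2(p)/p=(1-1/p)(1-\nu_1(p)/(p-1))$ for (\ref{nu2prod}). The only difference is that you spell out the partial-summation-and-logarithm argument deducing (\ref{nu1prod}) from (\ref{nu1sum}), which the paper simply labels as standard.
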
 

\begin{proof}
Since $f$ is irreducible it can be shown using ideas from algebraic number theory, see for example Diamond and Halberstam \cite[Proposition 10.1]{diamondhalberstam}, that 
\begin{equation}\label{nu1standard}
\sum_{p\leq x}\frac{\nu_1(p)\log p}{p}=\log x+O(1).
\end{equation}
However 
$$\sum_{p\leq x}\nu_1(p)\log p\left(\frac{1}{p-1}-\frac{1}{p}\right)= \sum_{p\leq x}\frac{\nu_1(p)\log p}{p^2}\ll \sum_{p\leq x}\frac{\log p}{p^2}\ll 1$$
so (\ref{nu1sum}) follows.  Combining (\ref{nu1standard}) with (\ref{nu1nu2}) and the Mertens estimate 
$$\sum_{p\leq x}\frac{\log p}{p}=\log x+O(1)$$
one can derive (\ref{nu2sum}).

The deduction of (\ref{nu1prod}) from (\ref{nu1sum}) is standard.  To prove (\ref{nu2prod}) we write 
$$1-\frac{\nu_2(p)}{p}=1-\frac{\nu_1(p)+1}{p}=\left(1-\frac{1}{p}\right)\left(1-\frac{\nu_1(p)}{p-1}\right).$$
We may therefore deduce (\ref{nu2prod}) from (\ref{nu1prod}) and the Mertens estimate 
$$\prod_{p\leq x}\left(1-\frac{1}{p}\right)\sim \frac{e^{-\gamma}}{\log x}.$$
\end{proof}

We will use the standard sieve theory notation 
$$\ca_d=\{n\in\ca:d|n\}$$
and 
$$S(\ca,z)=\#\{n\in \ca:(n,P(z))=1\},$$
where 
$$P(z)=\prod_{p<z}p.$$
We denote Euler's totient function by $\varphi(d)$.  The next lemma gives a level of distribution for $\ca$, showing that on average over sufficiently small $d$ we have 
$$\#\ca_d\approx \frac{X\nu_1(d)}{\varphi(d)}.$$
We therefore let 
$$R_d=\#\ca_d-\frac{X\nu_1(d)}{\varphi(d)}.$$

\begin{lem}\label{lod}
Suppose $0<\theta_1<\frac{1}{2k}$ and $A>0$.  We have 
$$\twosum{d\leq N^{\theta_1}}{\mu(d)\ne 0}|R_d|\ll_{A,\theta_1} x(\log x)^{-A}.$$
\end{lem}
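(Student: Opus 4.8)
The plan is to open up $R_d$ into a sum over residue classes and then feed it to the Bombieri--Vinogradov theorem, the only complication being the weight $\nu_1(d)$ that arises from the several classes. First I would record two easy facts about the range. Since $\theta_1<\frac1{2k}$ and $N\sim f_kx^k$, we have $N^{\theta_1}\ll x^{k\theta_1}$ with $k\theta_1<\frac12$, so that $N^{\theta_1}\le x^{1/2}(\log x)^{-B}$ for any fixed $B$ once $x$ is large; thus $d\le N^{\theta_1}$ lies comfortably inside the Bombieri--Vinogradov range. Moreover any such $d$ satisfies $d<x$, so every prime $p\in(x,2x]$ is automatically coprime to $d$. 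Hence the condition $d\mid f(p)$ just asks $p$ to lie in one of the reduced residue classes $a\pmod d$ with $f(a)\equiv 0\pmod d$, of which there are exactly $\nu_1(d)$. Writing $E(y;d,a)=\pi(y;d,a)-\pi(y)/\varphi(d)$, this gives
$$R_d=\twosum{a\pmod d}{(a,d)=1,\ f(a)\equiv0}\bigl(E(2x;d,a)-E(x;d,a)\bigr),$$
and therefore $|R_d|\le \nu_1(d)\max_{(a,d)=1}\bigl(|E(2x;d,a)|+|E(x;d,a)|\bigr)$.

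The obstruction to summing this directly, and the main point of the proof, is the factor $\nu_1(d)$: it can be as large as $k^{\omega(d)}$ and so is not absorbed by Bombieri--Vinogradov, which controls only the unweighted sum of $\max_a|E|$. I would remove it by Cauchy--Schwarz. Setting $m_d=\max_{(a,d)=1}|E(2x;d,a)|$ (and arguing identically with $x$ in place of $2x$), I write $\nu_1(d)m_d=(\nu_1(d)^2 m_d)^{1/2}m_d^{1/2}$ and split
$$\twosum{d\le N^{\theta_1}}{\mu(d)\ne0}\nu_1(d)m_d\le\Bigl(\twosum{d\le N^{\theta_1}}{\mu(d)\ne0}\nu_1(d)^2 m_d\Bigr)^{1/2}\Bigl(\twosum{d\le N^{\theta_1}}{\mu(d)\ne0}m_d\Bigr)^{1/2}.$$

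For the second factor the Bombieri--Vinogradov theorem gives $\ll_{A'}x(\log x)^{-A'}$ for any prescribed $A'$. For the first factor I would use the Brun--Titchmarsh inequality, which in our range $d\le N^{\theta_1}\ll x^{1/2}$ yields the trivial bound $m_d\ll x/(\varphi(d)\log x)$, together with the estimate $\sum_{d\le N^{\theta_1},\,\mu(d)\ne0}\nu_1(d)^2/\varphi(d)\ll(\log x)^{C}$ for some constant $C=C(k)$; the latter follows from $\nu_1(\ell)\le k$ and partial summation applied to (\ref{nu1sum}) in Lemma \ref{sumprods}. This bounds the first factor by $\ll x^{1/2}(\log x)^{(C-1)/2}$, so the whole sum is $\ll x(\log x)^{(C-1-A')/2}$; choosing $A'$ large (with the corresponding enlargement of the level constant $B$) makes the exponent at most $-A$, as required. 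The one routine point to check is the passage from the main term $\pi(y)/\varphi(d)$ to the $\mathrm{li}(y)/\varphi(d)$ occurring in the standard statement of Bombieri--Vinogradov, but their difference is independent of $a$ and contributes only $\ll y\exp(-c\sqrt{\log y})\log N^{\theta_1}$ after summation over $d$, which is negligible.
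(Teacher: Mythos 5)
Your proof is correct and follows the same route the paper intends: the paper's ``proof'' is a one-line citation to Richert's deduction from the Bombieri--Vinogradov theorem, and your argument supplies exactly the standard details of that deduction (decomposition of $R_d$ over the $\nu_1(d)$ reduced root classes, Cauchy--Schwarz plus Brun--Titchmarsh to absorb the divisor-bounded weight $\nu_1(d)$, and the routine $\pi(y)/\varphi(d)$ versus $\mathrm{li}(y)/\varphi(d)$ comparison). No gaps.
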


\begin{proof}
This may be deduced from the Bombieri-Vinogradov theorem, the details are given by Richert as part of the proof of \cite[Theorem 7]{richert}.
\end{proof}

\section{The Weighted Sieve}

Apart from some changes of notation, we will use the same weighted sieve as Richert \cite{richert}.  For fixed $k$ and $r$ let $0<\alpha<\beta$ be constants to be chosen later and let 
$$z=N^\alpha,\qquad y=N^\beta.$$
We define 
$$\eta=r+1-\frac{1}{\beta}$$
so that $\eta>0$ whenever 
$$\beta>\frac{1}{r+1}.$$
We consider the sum 
$$S=S(\ca,r,\alpha,\beta)=\twosum{n\in\ca}{(n,P(z))=1}w(n)$$
where 
$$w(n)=1-\frac{1}{\eta}\twosum{p|n}{z\leq p<y}w_p$$
with 
$$w_p=1-\frac{\log p}{\log y}.$$

\begin{lem}\label{lemweighted}
Suppose that for a given $k,r$ there exist constants $\alpha,\beta$ with 
$$0<\alpha<\beta<\frac{1}{k}$$
and 
$$\beta>\frac{1}{r+1}$$
for which the estimate 
\begin{equation}\label{Sbound}
S\gg_{\alpha,\beta,r} \frac{x}{(\log x)^2}
\end{equation}
holds.  We may then conclude that 
$$\#(\ca\cap P_r)\gg_r \frac{x}{(\log x)^2}.$$
\end{lem}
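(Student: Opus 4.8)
The plan is to show that $S$ exceeds $\#(\ca\cap P_r)$ by only a negligible amount. First I would split the sum defining $S$ according to the number of prime factors, writing $S=S_{\le r}+S_{>r}$, where $S_{\le r}$ collects the $n$ with $\Omega(n)\le r$ and $S_{>r}$ the rest. Since $w_p=1-\frac{\log p}{\log y}$ lies in $(0,1)$ for $z\le p<y$, the subtracted quantity in $w(n)$ is nonnegative, so $w(n)\le 1$ for every $n$; and every $n$ counted in $S_{\le r}$ is by definition a $P_r$. Hence $S_{\le r}\le\#(\ca\cap P_r)$, and it remains to show that almost no term of $S_{>r}$ is positive.

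The heart of the matter is a pointwise bound on $\Omega(n)$ for $n\in\ca$ with $(n,P(z))=1$. Writing $n$ as a product of primes $p\ge z$ with multiplicities $b_p$ and separating the primes in $[z,y)$ from those $\ge y$, I would use $\log n\le\log N$ and $\log y=\beta\log N$ to bound the number of prime factors that are $\ge y$ by $\frac1\beta\bigl(1-\frac1{\log N}\sum_{z\le p<y}b_p\log p\bigr)$. Adding the factors in $[z,y)$ and replacing $\log p/\log N$ by $\beta\,\log p/\log y$ collapses this into
\[
\Omega(n)\le \frac1\beta + \twosum{p\mid n}{z\le p<y} b_p\,w_p .
\]
When $n$ has no repeated prime factor in $[z,y)$ every $b_p=1$; if also $w(n)>0$, i.e.\ $\sum_{p\mid n,\,z\le p<y}w_p<\eta$, then $\Omega(n)<\frac1\beta+\eta=r+1$ by the definition of $\eta$. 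Thus a term of $S_{>r}$ can be positive only when $n$ is divisible by $p^2$ for some prime $p$ with $z\le p<y$. This is precisely where the hypotheses $\beta>1/(r+1)$ (ensuring $\eta>0$, so $w(n)$ is defined and the threshold is $r+1$) and $0<\alpha<\beta$ are used.

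The main obstacle is therefore to bound the number of square-divisible values, namely to prove
\[
E:=\#\{q\in(x,2x]:\,p^2\mid f(q)\text{ for some prime }p,\ z\le p<y\}=o\!\left(\frac{x}{(\log x)^2}\right),
\]
after which $S\le\#(\ca\cap P_r)+E$ and the conclusion follows from the assumed bound $S\gg x/(\log x)^2$. To estimate $E$ I would split the range of $p$ at $x^{1/2}$. For $z\le p\le x^{1/2}$ the congruence $f(q)\equiv0\pmod{p^2}$ has at most $\nu_1(p^2)$ solutions per interval of length $p^2\le x$, giving $\ll x\,\nu_1(p^2)/p^2$ admissible $q$; summing over $p\ge z$ and using $z=N^\alpha$ yields a contribution $\ll x/z=o(x/(\log x)^2)$ because $\alpha>0$. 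For $x^{1/2}<p<y$ one has $p^2>x$, so each such $p$ contributes at most $\nu_1(p^2)$ values of $q\in(x,2x]$; since $\beta<1/k$ gives $y\asymp x^{k\beta}$ with $k\beta<1$, summing over $p<y$ produces $\ll\pi(y)\ll y/\log y=o(x/(\log x)^2)$. Combining the two ranges controls $E$. The only delicate point is uniform control of $\nu_1(p^2)$: one has $\nu_1(p^2)\le k$ for all $p$ exceeding the finitely many, $f$-dependent primes dividing the discriminant of $f$, and since $z\to\infty$ this holds for every $p\ge z$ once $x$ is large, the excluded primes being absorbed by implied constants depending on $f$.
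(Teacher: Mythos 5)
Your proof is correct and follows essentially the same route as the paper: the same pointwise inequality derived from $\log n\le\log N$ and the sign of $w_p$ shows that $w(n)>0$ forces $n\in P_r$ unless $p^2\mid n$ for some $p\in[z,y)$, and the exceptional count is bounded by $\sum_{z\le p<y}\#\ca_{p^2}\ll x/z+y=o(x/(\log x)^2)$ exactly as in the paper. The only differences (splitting $S$ by $\Omega(n)$ rather than bounding $\#\{n:w(n)>0\}$ from below, and splitting the $p$-sum at $\sqrt x$ instead of using a uniform $x/p^2+1$ bound) are cosmetic.
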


\begin{proof}
Since $\beta>\frac{1}{r+1}$ we have $\eta>0$ and therefore $w(n)\leq 1$ for all $n\in\ca$.  The bound (\ref{Sbound}) therefore implies that 
$$\#\{n\in\ca:(n,P(z))=1,w(n)>0\}\gg \frac{x}{(\log x)^2}.$$
However, if $(n,P(z))=1$ and $w(n)>0$, we have 
$$1-\frac{1}{\eta}\twosum{p|n}{p<y}\left(1-\frac{\log p}{\log y}\right)>0$$
which implies that 
$$\twosum{p|n}{p<y}\left(1-\frac{\log p}{\log y}\right)<r+1-\frac{1}{\beta}.$$
If $p\geq y$ then $1-\frac{\log p}{\log y}\leq 0$ so we deduce that 
$$\twosum{p|n}{p<y}\left(1-\frac{\log p}{\log y}\right)+\twosum{p^a|n}{p\geq y}\left(1-\frac{\log p}{\log y}\right)<r+1-\frac{1}{\beta}.$$
We conclude that 
$$\#\{p|n:p<y\}+\#\{p^a|n:p\geq y\}<r+1-\frac{1}{\beta}+\frac{\log n}{\log y}\leq r+1.$$
We have therefore shown that $\ca$ contains $\gg\frac{x}{(\log x)^2}$ numbers, all of whose prime factors are at least $z$, for which 
$$\#\{p|n:p<y\}+\#\{p^a|n:p\geq y\}\leq r.$$

To complete the proof we must show that prime factors in $[z,y)$ can be counted with multiplicity.  We therefore estimate 
\begin{eqnarray*}
\sum_{z\leq p<y}\#\ca_{p^2}&\leq&\sum_{z\leq p<y}\#\{n\in (x,2x]:p^2|f(n)\}\\
&\ll&\sum_{z\leq p<y}\#\{a\pmod{p^2}:f(a)\equiv 0\pmod{p^2}\}(\frac{x}{p^2}+1)\\
&\ll_f&\sum_{z\leq p<y}(\frac{x}{p^2}+1)\\
&\ll&\frac{x}{z}+y=o(\frac{x}{(\log x)^2}),\\
\end{eqnarray*}
where the last inequality follows since $\alpha>0$ and $\beta<\frac{1}{k}$.  We conclude that the contribution to our count from those $n$ divisible by the square of a prime from $[z,y)$ is sufficiently small and so, since $n$ is not divisible by any prime $p<z$, the result follows.
\end{proof}

In order to prove Theorem \ref{mainthm} it remains to show that, for the given $r$, we can choose suitable $\alpha$ and $\beta$ for which (\ref{Sbound}) can be established.  We begin by writing 
$$S=S(\ca,z)-\frac{1}{\eta}\sum_{z\leq p<y}w_pS(\ca_p,z).$$
We let $u=N^\delta$ for some $\delta\in (\alpha,\beta)$ and split the sum at $u$ to get 
$$S=S(\ca,z)-\frac{1}{\eta}\left(\sum_{z\leq p<u}w_pS(\ca_p,z)+\sum_{u\leq p<y}w_pS(\ca_p,z)\right).$$
In the next section we will give a lower bound for the first term of this and upper bounds for the two sums.  

\section{Sieve Estimates}

We will use both the $1$ and $2$-dimensional forms of the beta-sieve, as described by Friedlander and Iwaniec in \cite[Theorem 11.13]{opera}.  We therefore let $F_\kappa$ and $f_\kappa$ denote the upper and lower bound sieve functions in dimension $\kappa$.  We begin by estimating $S(\ca,z)$ by means of a $1$-dimensional sieve of level $N^{\theta_1}$ for $\theta_1<\frac{1}{2k}$. The estimate (\ref{nu1sum}) and Lemma \ref{lod} show that this is permissible and we get 
$$S(\ca,z)\geq XV(z)\left(f_1\left(\frac{\theta_1}{\alpha}\right)+o(1)\right),$$
where 
$$V(z)=\prod_{p<z}\left(1-\frac{\nu_1(p)}{\varphi(p)}\right).$$   
Next we estimate the sum over $z\leq p<u$.

\begin{lem}\label{lem1dim}
If $0<\alpha<\delta<\theta_1$ then 
$$\sum_{z\leq p<u}w_p S(\ca_p,z)\leq XV(z)\left(\int_\alpha^\delta\left(\frac{1}{s}-\frac{1}{\beta}\right)F_1\left(\frac{\theta_1-s}{\alpha}\right)\,ds+o(1)\right).$$
\end{lem}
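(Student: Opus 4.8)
The plan is to apply an upper-bound one-dimensional beta-sieve to each individual set $\ca_p$, and then to reassemble the resulting bounds into the stated integral by partial summation. Fix a prime $p$ with $z\leq p<u$ and write $p=N^s$, so that $\alpha\leq s<\delta$. Since every sifting prime $q<z\leq p$ is automatically coprime to $p$, a squarefree $d\mid P(z)$ satisfies $(d,p)=1$ and $\mu(pd)\neq 0$, and the multiplicativity of $\nu_1$ gives
$$\#(\ca_p)_d=\#\ca_{pd}=\frac{X\nu_1(pd)}{\varphi(pd)}+R_{pd}=\frac{X\nu_1(p)}{\varphi(p)}\cdot\frac{\nu_1(d)}{\varphi(d)}+R_{pd}.$$
Thus $\ca_p$ has density function $g(d)=\nu_1(d)/\varphi(d)$, the same as for $\ca$ and hence of dimension $1$ by (\ref{nu1sum}); it has expected size $X\nu_1(p)/\varphi(p)$ and sieve remainder $R_{pd}$.

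The level of distribution available for $\ca_p$ is $D=N^{\theta_1}/p$: the moduli $pd$ then range over squarefree integers below $N^{\theta_1}$, each counted at most $O(\log N)$ times as $p$ varies, so Lemma \ref{lod} gives
$$\sum_{z\leq p<u}w_p\twosum{d<D}{d\mid P(z)}|R_{pd}|\ll \log N\twosum{m\leq N^{\theta_1}}{\mu(m)\neq 0}|R_m|\ll x(\log x)^{-A+1},$$
which is negligible once $A$ is large. The corresponding sieve parameter is $\log D/\log z=(\theta_1-s)/\alpha$, and because $s<\delta<\theta_1$ this stays bounded below by $(\theta_1-\delta)/\alpha>0$, so the relative error $o(1)$ in the beta-sieve is uniform in $p$ and $F_1$ is bounded throughout. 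Finally, as $p\geq z$ the sifting product $\prod_{q<z,\,q\neq p}(1-\nu_1(q)/\varphi(q))$ coincides with $V(z)$. The upper-bound sieve therefore yields, uniformly for $z\leq p<u$,
$$S(\ca_p,z)\leq X\frac{\nu_1(p)}{\varphi(p)}V(z)\left(F_1\left(\frac{\theta_1-s}{\alpha}\right)+o(1)\right).$$

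It then remains to sum over $p$ and pass to the integral. Multiplying by $w_p=1-\frac{\log p}{\log y}=1-\frac{s}{\beta}$ and using $\nu_1(p)/\varphi(p)=\nu_1(p)/(p-1)$, I would apply partial summation against the measure in (\ref{nu1sum}), namely $\sum_{p\leq t}\frac{\nu_1(p)\log p}{p-1}=\log t+O(1)$, whose density is $dt/t$. Substituting $t=N^s$ converts $dt/t$ into $\log N\,ds$ and $(\log p)^{-1}$ into $(s\log N)^{-1}$, so the two factors of $\log N$ cancel and
$$\sum_{z\leq p<u}w_p\frac{\nu_1(p)}{\varphi(p)}F_1\left(\frac{\theta_1-s}{\alpha}\right)=\int_\alpha^\delta\frac{1-s/\beta}{s}F_1\left(\frac{\theta_1-s}{\alpha}\right)ds+o(1),$$
and $\frac{1-s/\beta}{s}=\frac1s-\frac1\beta$ produces the claimed integrand. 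The contributions of the uniform $o(1)$ and of the remainder bounded above are both $o(XV(z))$, completing the estimate.

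The main obstacle I anticipate is not any single step but the uniformity bookkeeping: one must verify that the beta-sieve remainder is small after summation over the entire range of $p$, which is exactly what the $O(\log N)$ multiplicity bound together with Lemma \ref{lod} secures, and that the relative $o(1)$ error is genuinely uniform in $p$. The latter is the reason the sum is truncated at $u=N^\delta$ with $\delta<\theta_1$: this keeps the sieve parameter $(\theta_1-s)/\alpha$ bounded away from $0$, where $F_1$ would otherwise blow up and the $o(1)$ would cease to be uniform.
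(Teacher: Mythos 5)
Your proposal is correct and follows essentially the same route as the paper: a one-dimensional upper-bound beta-sieve applied to each $\ca_p$ at level $N^{\theta_1}/p$, with the remainders $R_{pd}$ collected into a single application of Lemma \ref{lod} over squarefree moduli below $N^{\theta_1}$, uniformity of the sieve error secured by $\delta<\theta_1$, and partial summation against (\ref{nu1sum}) to produce the integral. (The paper in fact notes that each modulus $pd$ arises only once, so even your $O(\log N)$ multiplicity factor is not needed, but this makes no difference.)
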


\begin{proof}
Since $\delta<\theta_1$ we have $\frac{N^{\theta_1}}{p}\geq 1$ for each $p<u$.  We may therefore apply a $1$-dimensional upper bound sieve of level $\frac{N^{\theta_1}}{p}$ to each $S(\ca_p,z)$.  Observe that if $p\geq z$ and $d|P(z)$ then $(d,p)=1$.  We therefore have 
$$S(\ca_p,z)\leq \frac{X\nu_1(p)}{\varphi(p)}V(z)\left(F_1(s_p)+O\left(\left(\log\frac{N^{\theta_1}}{p}\right)^{-\frac{1}{6}}\right)\right)+O\left(\twosum{d\leq N^{\theta_1/p}}{\mu(d)\ne 0}|R_{pd}|\right),$$  
where 
$$s_p=\frac{\log(N^{\theta_1}/p)}{\log z}.$$
Since $p< N^\delta$ and $\delta<\theta_1$ we have 
$$\left(\log\frac{N^{\theta_1}}{p}\right)^{-\frac{1}{6}}\leq ((\theta_1-\delta)\log N)^{-\frac{1}{6}}=O_{\theta_1,\delta}((\log x)^{-\frac{1}{6}}).$$
In addition, applying Lemma \ref{lod} gives 
$$\sum_{z\leq p<u}w_p\twosum{d\leq N^{\theta_1/p}}{\mu(d)\ne 0}|R_{pd}|\leq \twosum{d\leq N^{\theta_1}}{\mu(d)\ne 0}|R_d|\ll_{A,\theta_1}x(\log x)^{-A}$$
so that 
$$\sum_{z\leq p<u}w_p S(\ca_p,z)\leq XV(z)\sum_{z\leq p<u}w_p\frac{\nu_1(p)}{\varphi(p)}(F_1(s_p)+O((\log x)^{-\frac{1}{6}}))+O_{A,\theta_1}(x(\log x)^{-A}).$$
By taking $A$ large enough and using the estimate (\ref{nu1prod}) we see that 
$$x(\log x)^{-A}=o(XV(z)).$$
In addition we have 
$$\sum_{z\leq p<u}w_p\frac{\nu_1(p)}{\varphi(p)}(\log x)^{-\frac{1}{6}}\ll (\log x)^{-\frac{1}{6}}\sum_{z\leq p<u}\frac{1}{p}=o(1)$$
so it only remains to evaluate 
$$\sum_{z\leq p<u}w_p\frac{\nu_1(p)}{\varphi(p)}F_1(s_p)=\sum_{z\leq p<u}\frac{\nu_1(p)\log p}{p-1}g(p),$$
where
$$g(t)=\left(\frac{1}{\log t}-\frac{1}{\log y}\right)F_1\left(\frac{\log(N^{\theta_1}/t)}{\log z}\right).$$
Summing by parts and applying the estimate (\ref{nu1sum}) then gives 
$$\sum_{z\leq p<u}w_p\frac{\nu_1(p)}{\varphi(p)}F_1(s_p)=(\log u-\log z+O(1))g(u)-\int_z^u (\log t-\log z+O(1))g'(t)\,dt.$$
To estimate the contribution to this from the error terms we observe that 
$$g(u)\ll\frac{1}{\log N}=o(1)$$
and 
$$\int_z^u |g'(t)|\,dt=\int_\alpha^\delta\left|\frac{dg(N^s)}{ds}\right|\,ds.$$
However 
$$g(N^s)=\left(\frac{1}{s\log N}-\frac{1}{\beta\log N}\right)F_1\left(\frac{\theta_1-s}{\alpha}\right)$$
so 
$$\frac{dg(N^s)}{ds}\ll \frac{1}{\log N}$$
and thus 
$$\int_z^u|g'(t)|\,dt=o(1).$$
It follows that 
\begin{eqnarray*}
\sum_{z\leq p<u}w_p\frac{\nu_1(p)}{\varphi(p)}F_1(s_p)&=&(\log u-\log z)g(u)-\int_z^u (\log t-\log z)g'(t)\,dt+o(1)\\
&=&\int_z^u\frac{g(t)}{t}\,dt+o(1)\\ 
&=&\int_z^u\frac{1}{t}\left(\frac{1}{\log t}-\frac{1}{\log y}\right)F_1\left(\frac{\log(N^{\theta_1}/t)}{\log z}\right)\,dt+o(1)\\
&=&\int_\alpha^\delta\left(\frac{1}{s}-\frac{1}{\beta}\right)F_1\left(\frac{\theta_1-s}{\alpha}\right)\,ds+o(1).\\
\end{eqnarray*}
\end{proof}

Next we use the $2$-dimensional sieve to estimate $S(\ca_p,z)$.  It is this result which enables us to take $r<2k$.  

\begin{lem}\label{lem2dim}
Suppose $\theta_2,\alpha<\frac{1}{k}$ and $\beta<\theta_2$.  For any $p\in [z,y)$ we have 
$$S(\ca_p,z)\leq \frac{x\nu_1(p)}{p}V_2(z)(F_2(s'_p)+O((\log x)^{-\frac{1}{6}}))$$
where 
$$V_2(z)=\prod_{p<z}\left(1-\frac{\nu_2(p)}{p}\right)$$
and 
$$s'_p=\frac{\log(N^{\theta_2/p})}{\log z}.$$
\end{lem}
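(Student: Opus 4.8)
The plan is to bound $S(\ca_p,z)$ by a two-dimensional sieve applied not to the values $f(q)$ at primes, but to the products $mf(m)$ with $m$ ranging over \emph{all} integers in $(x,2x]$. The gain is that for such a sequence the remainder terms admit a trivial bound, which lets us sieve to a level almost as large as $x$ rather than the $\sqrt x$ forced on us by the Bombieri--Vinogradov theorem in Lemma~\ref{lod}.

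First I would pass from primes to rough numbers. An element of $\ca_p$ is $f(q)$ with $q\in(x,2x]$ prime and $p\mid f(q)$, and it is counted in $S(\ca_p,z)$ exactly when $(f(q),P(z))=1$. Since $\alpha<\frac1k$ we have $z=N^\alpha<x<q$, so such a prime $q$ is automatically coprime to $P(z)$, whence $(qf(q),P(z))=1$. Relaxing the requirement that $q$ be prime to the weaker condition $(m,P(z))=1$ therefore yields $S(\ca_p,z)\le S(\mathcal B_p,z)$, where $\mathcal B_p=\{mf(m):m\in(x,2x],\ p\mid f(m)\}$ is now sifted by the primes below $z$.

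Next I would assemble the sieve data for $\mathcal B_p$. For squarefree $d\mid P(z)$ we have $(d,p)=1$, so by the Chinese Remainder Theorem the number of $m\in(x,2x]$ with $p\mid f(m)$ and $d\mid mf(m)$ is $\frac{x\nu_1(p)\nu_2(d)}{pd}+O(\nu_1(p)\nu_2(d))$. Thus the sieve has base $\frac{x\nu_1(p)}{p}$, density $g(d)=\nu_2(d)/d$ and remainder $r_d=O(\nu_1(p)\nu_2(d))$; by (\ref{nu2sum}) we have $\sum_{w\le p<z}g(p)\log p=2\log(z/w)+O(1)$, so the sieve has dimension $2$. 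The hypothesis (\ref{localcond}) gives $\nu_2(p)=\nu_1(p)+1\le p-1$, so $g(p)<1$ and $V_2(z)=\prod_{p<z}(1-\nu_2(p)/p)>0$. Applying the two-dimensional upper-bound beta-sieve of \cite[Theorem 11.13]{opera} at level $D=N^{\theta_2}/p$, which exceeds $N^{\theta_2-\beta}$ because $p<y=N^\beta$ and $\beta<\theta_2$, produces $s=\log D/\log z=s'_p$ together with the main term $\frac{x\nu_1(p)}{p}V_2(z)F_2(s'_p)$ and sieve error $O((\log D)^{-1/6})=O((\log x)^{-1/6})$.

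The decisive step, which I expect to be the only real obstacle, is to check that the accumulated remainder is negligible. Since $\nu_2$ is a dimension-$2$ multiplicative function, $\sum_{d<D}\nu_2(d)\ll D\log x$, so $\sum_{d<D,\,d\mid P(z)}|r_d|\ll \nu_1(p)\frac{N^{\theta_2}}{p}\log x$. Dividing by the main term, whose size is $\asymp\frac{x\nu_1(p)}{p}(\log x)^{-2}$, the factor $p$ cancels and the ratio is $\ll N^{\theta_2}x^{-1}(\log x)^3=x^{k\theta_2-1}(\log x)^3$, which tends to $0$ precisely because $\theta_2<\frac1k$; crucially this is uniform in $p\in[z,y)$. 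The remainder is thus swallowed by the $O((\log x)^{-1/6})$ term, and the stated bound follows. The whole argument rests on having replaced the prime-indexed sequence $f(q)$, for which Lemma~\ref{lod} allows only level $\sqrt x$, by $mf(m)$, whose trivial remainder reaches level $N^{\theta_2}$ with $\theta_2$ as large as $\frac1k$.
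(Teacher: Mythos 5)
Your proposal follows essentially the same route as the paper: relax the prime argument $q$ to all integers $m\in(x,2x]$ (legitimate since $z<x$), sieve the sequence $mf(m)$ with density $\nu_2(d)/d$ in dimension $2$ at level $N^{\theta_2}/p$, and absorb the trivially-bounded remainder using $k\theta_2<1$. The only (harmless) difference is bookkeeping of the remainder — the paper uses $\nu_2(d)\ll_\epsilon d^\epsilon$ to get $O_\epsilon(N^{\theta_2+\epsilon}/p)$ while you sum $\nu_2(d)$ directly, where the correct power of $\log$ is $(\log x)^2$ rather than $\log x$, but the power saving makes this immaterial.
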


\begin{proof}
For $p\geq z$ we have 
$$S(\ca_p,z)\leq \#\{n\in (x,2x]:p|f(n),(nf(n),P(z))=1\}.$$
We will therefore apply an upper bound sieve to the set 
$$\{nf(n):n\in (x,2x]:p|f(n)\}.$$
If $d|P(z)$ then 
$$\#\{n\in (x,2x]:p|f(n),d|nf(n)\}=\twosum{a\pmod{dp}}{f(a)\equiv 0\pmod p,af(a)\equiv 0\pmod d}\left(\frac{x}{pd}+O(1)\right).$$
Since $(d,p)=1$ it follows by the Chinese Remainder Theorem that 
\begin{eqnarray*}
\lefteqn{\#\{a\pmod{pd}:f(a)\equiv 0\pmod p,af(a)\equiv 0\pmod d\}}\\
\hspace{3cm}&=&\#\{a\pmod p:f(a)\equiv 0\pmod p\}\nu_2(d)=\nu_1(p)\nu_2(d),\\
\end{eqnarray*}
where we have used the fact that $p$ is large so $f(0)\not\equiv 0\pmod p$.  For any $\epsilon>0$ we conclude, using the bounds 
$$\nu_1(p)\ll 1 \text{ and }\nu_2(d)\ll_\epsilon d^\epsilon,$$
 that 
$$\#\{n\in (x,2x]:p|f(n),d|nf(n)\}=\frac{x\nu_1(p)\nu_2(d)}{pd}+O_\epsilon(d^\epsilon).$$
The estimate (\ref{nu2sum}) shows that the $2$-dimensional sieve may be applied with density function $\frac{\nu_2(d)}{d}$.  We do so, with level $N^{\theta_2}/p$, to deduce that 
$$S(\ca_p,z)\leq \frac{x\nu_1(p)}{p}V_2(z)\left(F_2(s'_p)+O\left(\left(\log\frac{N^{\theta_2}}{p}\right)^{-\frac{1}{6}}\right)\right)+O_\epsilon\left(\frac{N^{\theta_2+\epsilon}}{p}\right).$$
Since $\beta<\theta_2$ it follows that for $p\leq y$ we have 
$$\left(\log\frac{N^{\theta_2}}{p}\right)^{-\frac{1}{6}}\leq ((\theta_2-\beta)\log N)^{-\frac{1}{6}}=O_{\theta_2,\beta}((\log x)^{-\frac{1}{6}}).$$
In addition, since $\theta_2<\frac{1}{k}$ we can take a sufficiently small $\epsilon$ and use (\ref{nu2prod}) to obtain 
$$N^{\theta_2+\epsilon}\ll xV_2(z)(\log x)^{-\frac{1}{6}}.$$
\end{proof}

We sum this estimate over $u\leq p<y$ to deduce the following.

\begin{lem}
If $\alpha,\beta,\theta_2$ satisfy the hypotheses of the last lemma and $\alpha<\delta<\beta$ then 
$$\sum_{u\leq p<y}w_pS(\ca_p,z)\leq XV(z)\left(\frac{e^{-\gamma}}{k\alpha}\int_\delta^\beta\left(\frac{1}{s}-\frac{1}{\beta}\right)F_2\left(\frac{\theta_2-s}{\alpha}\right)\,ds+o(1)\right).$$
\end{lem}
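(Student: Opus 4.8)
The plan is to substitute the upper bound of Lemma \ref{lem2dim} into the left-hand side and then run the partial-summation argument used to prove Lemma \ref{lem1dim}. Summing that bound over $u\leq p<y$ against the weights $w_p$ gives
$$\sum_{u\leq p<y}w_pS(\ca_p,z)\leq xV_2(z)\sum_{u\leq p<y}w_p\frac{\nu_1(p)}{p}\bigl(F_2(s'_p)+O((\log x)^{-1/6})\bigr).$$
The error term contributes $o(1)$ once $xV_2(z)$ is factored out: since $\nu_1(p)\ll 1$ and $u=N^\delta$, $y=N^\beta$, Mertens gives $\sum_{u\leq p<y}\frac1p=\log(\beta/\delta)+o(1)=O(1)$, so the $O((\log x)^{-1/6})$ term sums to $o(1)$. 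It therefore remains to evaluate $\sum_{u\leq p<y}w_p\frac{\nu_1(p)}{p}F_2(s'_p)$ and then to convert the prefactor $xV_2(z)$ into the stated shape $\frac{e^{-\gamma}}{k\alpha}XV(z)$.

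For the prime sum I would rewrite it exactly as in Lemma \ref{lem1dim}. Using $w_p=1-\frac{\log p}{\log y}$,
$$w_p\frac{\nu_1(p)}{p}F_2(s'_p)=\frac{\nu_1(p)\log p}{p}h(p),\qquad h(t)=\left(\frac{1}{\log t}-\frac{1}{\log y}\right)F_2\left(\frac{\log(N^{\theta_2}/t)}{\log z}\right).$$
I would then sum by parts using the estimate (\ref{nu1standard}), namely $\sum_{p\leq t}\frac{\nu_1(p)\log p}{p}=\log t+O(1)$. As in the earlier lemma, the boundary terms and the contribution of the $O(1)$ error are controlled by $h(u)\ll 1/\log N$ together with $\int_u^y|h'(t)|\,dt=\int_\delta^\beta\bigl|\tfrac{d}{ds}h(N^s)\bigr|\,ds\ll 1/\log N=o(1)$, since $h(N^s)=\tfrac{1}{\log N}(\tfrac1s-\tfrac1\beta)F_2(\tfrac{\theta_2-s}{\alpha})$. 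This collapses the sum to $\int_u^y\frac{h(t)}{t}\,dt+o(1)$, and the substitution $t=N^s$ yields $\int_\delta^\beta(\tfrac1s-\tfrac1\beta)F_2(\tfrac{\theta_2-s}{\alpha})\,ds+o(1)$.

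Finally I would carry out the asymptotic bookkeeping on the prefactor, which I expect to be the only delicate point. By (\ref{nu1prod}) and (\ref{nu2prod}) applied with $z=N^\alpha$ in place of $x$,
$$\frac{V_2(z)}{V(z)}\sim\frac{e^{-\gamma}}{\log z}=\frac{e^{-\gamma}}{\alpha\log N}.$$
Combining this with the Prime Number Theorem form $X\sim x/\log x$ and the relation $\log N\sim k\log x$ gives $\frac{x}{\log N}\sim\frac{X}{k}$, hence $xV_2(z)\sim V(z)\frac{e^{-\gamma}}{\alpha}\cdot\frac{x}{\log N}\sim\frac{e^{-\gamma}}{k\alpha}XV(z)$. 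Substituting this into the displayed bound produces the claimed estimate. The main obstacle is precisely keeping these several asymptotic relations consistent — $x$ versus $X$, $\log N$ versus $\log x$, and the two products $V$, $V_2$ evaluated at $z$ rather than at $x$ — so that the constant $\frac{e^{-\gamma}}{k\alpha}$ emerges correctly; the partial summation itself is routine, being essentially identical to that in Lemma \ref{lem1dim}.
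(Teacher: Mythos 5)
Your proposal is correct and follows essentially the same route as the paper: substitute the bound of Lemma \ref{lem2dim}, run the identical partial-summation argument as in Lemma \ref{lem1dim} to produce the integral $\int_\delta^\beta(\tfrac1s-\tfrac1\beta)F_2(\tfrac{\theta_2-s}{\alpha})\,ds$, and then convert $xV_2(z)$ into $\tfrac{e^{-\gamma}}{k\alpha}XV(z)$ via Lemma \ref{sumprods} and the Prime Number Theorem. The bookkeeping you flag as delicate is exactly what the paper does, and your constants come out the same.
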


\begin{proof}
From the last lemma we obtain
$$\sum_{u\leq p<y}w_pS(\ca_p,z)\leq xV_2(z)\sum_{u\leq p<y}w_p\left(\frac{\nu_1(p)}{p}(F_2(s'_p)+O((\log x)^{-\frac{1}{6}}))\right).$$
We have 
$$\sum_{u\leq p<y}w_p\frac{\nu_1(p)}{p}(\log x)^{-\frac{1}{6}}\ll (\log x)^{-\frac{1}{6}}\sum_{u\leq p<y}\frac{1}{p}=o(1)$$
so it remains to evaluate 
$$\sum_{u\leq p<y}w_p\frac{\nu_1(p)}{p}F_2(s'_p)=\sum_{u\leq p<y}\frac{\nu_1(p)\log p}{p}h(p)$$
where 
$$h(t)=\left(\frac{1}{\log t}-\frac{1}{\log y}\right)F_2\left(\frac{\log(N^{\theta_2/t})}{\log z}\right).$$ 
Using partial summation and the estimate (\ref{nu1sum}) we obtain 
$$\sum_{u\leq p<y}w_p\frac{\nu_1(p)}{p}F_2(s'_p)=(\log y-\log u+O(1))h(y)-\int_u^y(\log t-\log u+O(1))h'(t)\,dt.$$
The contribution of the $O(1)$ errors can be dealt with by a very similar argument to the corresponding part of the proof of Lemma \ref{lem1dim}.  We therefore have 
\begin{eqnarray*}
\sum_{u\leq p<y}w_p\frac{\nu_1(p)}{p}F_2(s'_p)&=&(\log y-\log u)h(y)-\int_u^y(\log t-\log u)h'(t)\,dt+o(1)\\
&=&\int_u^y\frac{h(t)}{t}\,dt+o(1)\\
&=&\int_u^y\frac{1}{t}\left(\frac{1}{\log t}-\frac{1}{\log y}\right)F_2\left(\frac{\log(N^{\theta_2/t})}{\log z}\right)\,dt+o(1)\\  
&=&\int_\delta^\beta\left(\frac{1}{s}-\frac{1}{\beta}\right)F_2\left(\frac{\theta_2-s}{\alpha}\right)\,ds+o(1).\\  
\end{eqnarray*}
We conclude that 
$$\sum_{u\leq p<y}w_pS(\ca_p,z)\leq xV_2(z)\left(\int_\delta^\beta\left(\frac{1}{s}-\frac{1}{\beta}\right)F_2\left(\frac{\theta_2-s}{\alpha}\right)\,ds+o(1)\right).$$
To complete the proof we use  Lemma \ref{sumprods} to write 
$$V_2(z)=\frac{V(z)}{\log z}(e^{-\gamma}+o(1))$$
and we use the Prime Number Theorem to obtain 
$$x\sim X\log x\sim \frac{1}{k\alpha}X\log z.$$
\end{proof}

Combining all the estimates of this section we conclude that if $\alpha,\beta,\delta,\theta_1,\theta_2$ satisfy 
$$0<\alpha<\delta<\beta<\theta_2<\frac{1}{k}\text{ and }\delta<\theta_1<\frac{1}{2k}$$
then 
\begin{eqnarray*}
S(\ca,z)&\geq &XV(z)\left(f_1\left(\frac{\theta_1}{\alpha}\right)-\frac{1}{\eta}\left(\int_\alpha^\delta\left(\frac{1}{s}-\frac{1}{\beta}\right)F_1\left(\frac{\theta_1-s}{\alpha}\right)\,ds\right.\right.\\
&&\hspace{3cm}\left.\left.+\frac{e^{-\gamma}}{k\alpha}\int_\delta^\beta\left(\frac{1}{s}-\frac{1}{\beta}\right)F_2\left(\frac{\theta_2-s}{\alpha}\right)\,ds\right)+o(1)\right).\\
\end{eqnarray*}
It is clear that we should take $\theta_1,\theta_2$ as large as possible.  We therefore use continuity to see that for any $\epsilon>0$ we can choose $\theta_1$ sufficiently close to $\frac{1}{2k}$ and $\theta_2$ sufficiently close to $\frac{1}{k}$ to obtain 
\begin{eqnarray*}
S(\ca,z)&\geq &XV(z)\left(f_1\left(\frac{1}{2k\alpha}\right)-\frac{1}{\eta}\left(\int_\alpha^\delta\left(\frac{1}{s}-\frac{1}{\beta}\right)F_1\left(\frac{1-2ks}{2k\alpha}\right)\,ds\right.\right.\\
&&\hspace{3cm}\left.\left.+\frac{e^{-\gamma}}{k\alpha}\int_\delta^\beta\left(\frac{1}{s}-\frac{1}{\beta}\right)F_2\left(\frac{1-ks}{k\alpha}\right)\,ds\right)-\epsilon+o(1)\right).\\
\end{eqnarray*}
By (\ref{nu1prod}) and the Prime Number Theorem we know that 
$$XV(z)\gg \frac{x}{(\log x)^2}.$$
We may therefore deduce Theorem \ref{mainthm} for a given $k,r$ provided that we can find $\alpha,\delta,\beta$ satisfying 
$$0<\alpha<\delta<\beta<\frac{1}{k},\delta<\frac{1}{2k}\text{ and }\beta>\frac{1}{r+1}$$
for which 
$$f_1\left(\frac{1}{2k\alpha}\right)-\frac{1}{\eta}\left(\int_\alpha^\delta\left(\frac{1}{s}-\frac{1}{\beta}\right)F_1\left(\frac{1-2ks}{2k\alpha}\right)\,ds+\frac{e^{-\gamma}}{k\alpha}\int_\delta^\beta\left(\frac{1}{s}-\frac{1}{\beta}\right)F_2\left(\frac{1-ks}{k\alpha}\right)\,ds\right)>0.$$
Recalling that 
$$\eta=r+1-\frac{1}{\beta},$$
this is equivalent to 
$$r>\frac{1}{\beta}-1+\frac{1}{f_1(\frac{1}{2k\alpha})}\left(\int_\alpha^\delta\left(\frac{1}{s}-\frac{1}{\beta}\right)F_1\left(\frac{1-2ks}{2k\alpha}\right)\,ds+\frac{e^{-\gamma}}{k\alpha}\int_\delta^\beta\left(\frac{1}{s}-\frac{1}{\beta}\right)F_2\left(\frac{1-ks}{k\alpha}\right)\,ds\right).$$
We now let $\alpha_0=k\alpha$, $\beta_0=k\beta$, $\delta_0=k\delta$ and change variables in the integrals to write this as 
\begin{equation}\label{rcond}
r>\frac{k}{\beta_0}-1+\frac{1}{f_1(\frac{1}{2\alpha_0})}\left(\int_{\alpha_0}^{\delta_0}(\frac{1}{s}-\frac{1}{\beta_0})F_1\left(\frac{1-2s}{2\alpha_0}\right)\,ds+\frac{e^{-\gamma}}{\alpha_0}\int_{\delta_0}^{\beta_0}(\frac{1}{s}-\frac{1}{\beta_0})F_2\left(\frac{1-s}{\alpha_0}\right)\,ds\right).
\end{equation}
To find the permissible $r$ for a given $k$ it therefore only remains to calculate the minimum of this expression over the values of $\alpha_0,\beta_0,\delta_0$ which satisfy 
$$0<\alpha_0<\delta_0<\beta_0<1,\delta_0<\frac{1}{2}\text{ and }\frac{\beta_0}{k}>\frac{1}{r+1}.$$

\section{Proof of Theorem \ref{mainthm}}

To simplify the calculations we restrict $\alpha_0$ and $\delta_0$ so that all the sieve functions in (\ref{rcond}) may be given explicitly.  We have 
$$f_1(s)=A_1s^{-1}\log(s-1)\text{ for }2\leq s\leq 4,$$
$$F_1(s)=A_1s^{-1}\text{ for }s\leq 3$$
and 
$$F_2(s)=A_2s^{-2}\text{ for }s\leq \beta_2+1,$$
where 
$$A_1=2e^\gamma, \quad \beta_2=4.8333\ldots \quad \text{and}\quad A_2=43.496\ldots .$$
The values of $\beta_2$ and $A_2$ were computed using the formulae given by Friedlander and Iwaniec in \cite[Chapter 11]{opera}.  Note that the value of $A_2$ given by the table in \cite[Section  11.19]{opera} is incorrect by a factor of $2$ but our $\beta_2$ agrees with theirs.

We assume that 
$$\frac{1}{8}\leq \alpha_0\leq \frac{1}{4}$$
which implies that 
$$\frac{1-2\alpha_0}{2\alpha_0}\leq 3.$$
We also suppose that 
$$\frac{1-\delta_0}{\alpha_0}\leq \beta_2+1.$$
The condition (\ref{rcond}) then simplifies to 
$$r>\frac{k}{\beta_0}-1+\frac{1}{2A_1\log(\frac{1}{2\alpha_0}-1)}\left(2A_1\int_{\alpha_0}^{\delta_0}(\frac{1}{s}-\frac{1}{\beta_0})\frac{ds}{1-2s}+e^{-\gamma}A_2\int_{\delta_0}^{\beta_0}(\frac{1}{s}-\frac{1}{\beta_0})\frac{ds}{(1-s)^2}\right).$$
It is now clear that the optimal choice of $\delta_0$ is the root of 
$$\frac{2A_1}{1-2\delta_0}=\frac{e^{-\gamma}A_2}{(1-\delta_0)^2}$$
so it is $\delta_0=0.456\ldots$.  This choice satisfies $\delta_0<\frac{1}{2}$ as required.  We take $\alpha_0=\frac{1}{8}$, verifying that 
$$\frac{1-\delta_0}{\alpha_0}=4.344\ldots <\beta_2+1,$$ 
in which case (\ref{rcond}) becomes$r>r(k,\beta_0)$, where
$$r(k,\beta_0)=\frac{k}{\beta_0}-1+\frac{1}{2A_1\log 3}\left(2A_1\int_{\frac{1}{8}}^{\delta_0}\left(\frac{1}{s}-\frac{1}{\beta_0}\right)\frac{ds}{1-2s}+e^{-\gamma}A_2\int_{\delta_0}^{\beta_0}\left(\frac{1}{s}-\frac{1}{\beta_0}\right)\frac{ds}{(1-s)^2}\right).$$
To establish Theorem \ref{mainthm} we must minimise this over $\beta_0\in (\delta_0,1)$, verifying that the resulting $\beta_0,r$ satisfy 
\begin{equation}\label{betacond}
\frac{\beta_0}{k}>\frac{1}{r+1}.
\end{equation}
We note that both integrals in the above constraint can be evaluated explicitly.  In addition it can be shown by calculus that the optimal $\beta_0$ is of the form 
$$1-\frac{1}{c_1k+c_2}$$
for certain constants $c_1,c_2$.  We do not include all the details as it is not necessary to prove that our choices for $\beta_0$ are optimal.  For small $k$ we used a computer to symbolically evaluate the integrals and find the optimal $\beta_0$.  The results are given in Table \ref{betachoices}, they complete the proof of Theorem \ref{mainthm} for such $k$ (the condition (\ref{betacond}) can be verified in each instance).

\begin{table}
\centering

\label{betachoices}
\begin{tabular}{r|rrrrrrrrr}
  $k$ & 2 & 3 & 4 & 5 & 6 & 7 & 8 & 9 & 10\\
\hline
$\beta_0$ & 0.54 & 0.60 & 0.64 & 0.68 & 0.71 & 0.73 & 0.76 & 0.77 & 0.79\\
$r(k,\beta_0)$ & 4.2 & 5.9 & 7.6 & 9.1 & 10.5 & 11.9 & 13.2 & 14.5 & 15.8\\
$r$& 5 & 6 & 8 & 10 & 11 & 12 & 14 & 15 & 16\\ 
\end{tabular}
\caption{Results for small $k$}
\end{table}

It remains to deal with large $k$.  Observe that if we chose $\beta_0<1$ independent of $k$ we would have 
$$r(k,\beta_0)=\frac{k}{\beta_0}+c_{\beta_0},$$
for a constant $c_{\beta_0}$ depending on $\beta_0$. Instead we choose 
$$\beta_0=1-\frac{1}{k}$$
which, although not being completely optimal, is sufficient for our purposes.  We then have 
$$\frac{k}{\beta_0}=\frac{k^2}{k-1}=k+O(1),$$ 
$$\int_{\frac{1}{8}}^{\delta_0}\left(\frac{1}{s}-\frac{1}{\beta_0}\right)\frac{ds}{1-2s}=O(1)$$
and 
\begin{eqnarray*}
\int_{\delta_0}^{\beta_0}\left(\frac{1}{s}-\frac{1}{\beta_0}\right)\frac{ds}{(1-s)^2}&=&\int_{\delta_0}^{\beta_0}\left(\frac{1}{s}+\frac{1}{1-s}+(1-1/\beta_0)\frac{1}{(1-s)^2}\right)\,ds\\
&=&[-\log(1-s)+\frac{1-1/\beta_0}{1-s}]_{\delta_0}^{\beta_0}+O(1)\\
&=&-\log(1-\beta)+O(1)\\
&=&\log k+O(1).\\
\end{eqnarray*}
We conclude that 
$$r(k,1-\frac{1}{k})=k+\frac{e^{-\gamma}A_2}{2A_1\log 3}\log k+O(1)=k+c\log k+O(1),$$
with 
$$c=\frac{e^{-\gamma}A_2}{2A_1\log 3}=\frac{e^{-2\gamma}A_2}{4\log3}=3.120\ldots.$$
It only remains to verify the condition (\ref{betacond}), for which we require 
$$\frac{1}{k}-\frac{1}{k^2}>\frac{1}{k+c\log k+O(1)}.$$
This holds for sufficiently large $k$ since 
$$\frac{1}{k}-\frac{1}{k+c\log k}\geq \frac{c\log k}{k^2}.$$

\section{Possible Improvements}

There are several small improvements to our method which we chose not to implement.  Most significantly, Diamond and Halberstam \cite{diamondhalberstam} describe a $2$-dimensional sieve which is better than the beta-sieve.  Their upper bound function satisfies 
$$F_2(s)=\frac{A_2}{s^2}\text{ for }s\leq 2$$
with $A_2=25.377\ldots$.  For $s>2$ the expressions for $F_2(s)$ are not so simple and therefore the evaluation of (\ref{rcond}) would require numerical integration and optimisation.  This would certainly lead to an improvement to the value of $c$ in Theorem \ref{mainthm} and possibly also to better results for some small degrees.

Recent work of Zhang \cite{zhang} and Polymath \cite{Polymath8a} has given an improved level of distribution for the primes in arithmetic progressions to smooth moduli.  This could be used to slightly improve our lower bound for $S(\ca,z)$ by means of the Buchstab identity 
$$S(\ca,z)=S(\ca,w)-\sum_{w\leq p<z}S(\ca_p,p).$$
If $w$ is chosen to be a suitably small power of $x$ then the results of Zhang and Polymath would apply to the remainder term when estimating $S(\ca,w)$, thereby enabling us to sieve beyond $\sqrt{x}$ and get a better bound.  However, since the sieve function $f_1(s)$ converges rapidly to $1$ as $s\rightarrow \infty$ the improvement would be extremely small.

Finally it is worth commenting on the reason for the $\log\deg f$ term in Theorem \ref{mainthm}, especially since no such term appears in Richert's result $r\geq \deg f+1$ for the values of $f(n)$.  The difference is caused by the fact that, as $s\rightarrow 0$,  $F_2(s)$ grows like $s^{-2}$ whereas $F_1(s)$ only grows like $s^{-1}$.  Our weights $w_p$ do not decrease sufficiently rapidly to handle this behaviour of $F_2$.  One might therefore hope that our result could be improved by a better choice of weight.  Recall that as well as making the sum $S$ as large as possible the weights must be chosen in such a way that a result like Lemma \ref{lemweighted} can be established.

\newpage

\addcontentsline{toc}{section}{References} 
\bibliographystyle{plain}
\bibliography{../biblio}

\bigskip
\bigskip

Centre de recherches math\'ematiques,

Universit\'e de Montr\'eal,

Pavillon Andr\'e-Aisenstadt,

2920 Chemin de la tour, Room 5357,

Montr\'eal (Qu\'ebec) H3T 1J4

\bigskip 

{\tt alastair.j.irving@gmail.com}

\end{document}